\newtheorem{thm}{Theorem}
\newtheorem{lem}{Lemma}
\newtheorem{prop}{Proposition}
\newtheorem{cor}{Collorary}
\title{Compactly supported solution of the time-fractional porous medium equation on the half-line}
\author{\L ukasz P\l ociniczak\thanks{Faculty of Pure and Applied Mathematics, Wroc{\l}aw University of Science and Technology, Wyb. Wyspia{\'n}skiego 27, 50-370 Wroc{\l}aw, Poland},
	Mateusz \'Swita{\l}a\footnotemark[1]$\;^,$\footnote{\underline{Corresponding Author}, e-mail: mateusz.switala@pwr.edu.pl}}
\date{}
\begin{document}
\maketitle

\begin{abstract}
In this work we prove that the time-fractional porous medium equation on the half-line with Dirichlet boundary condition has a unique compactly supported solution. The approach we make is based on a transformation of the fractional integro-differential equation into a nonlinear Volterra integral equation. Then, the shooting method is applied in order to facilitate the analysis of the free-boundary problem. We further show that there exists an exactly one choice of initial conditions for which the solution has a zero which guarantees the no-flux condition. Then, our previous considerations imply the unique solution of the original problem.\\
	
\noindent\textbf{Keywords}: fractional derivative, porous medium equation, compact support, existence, uniqueness
\end{abstract}

\section{Introduction}
In the last few decades an interest in the fractional calculus increased significantly. This can be seen, for instance, in a profound development of theoretical aspects of this branch and its utility as a powerful tool in modelling many physical phenomena. A good example is anomalous diffusion, which has been deeply explored recently \cite{Met00}. Many experiments have been performed and their results indicate either sub- or superdiffusive character of various processes. For instance, apart from its emergence in biomechanical transport \cite{Lev97,Nos83} and condensed matter physics \cite{Ott90}, the anomalous diffusion is also present in percolation of some porous media \cite{El04,Kun01,Ram08}.

For us, the relevant physical experiment is based on a moisture imbibition in certain materials. For some specimens water diffuses at a slower pace than in the classical situation and the usual porous medium equation is not adequate to model this correctly \cite{El04,De06}. To deal with this problem an approach based on the modelling of the waiting times distribution has been proposed and a time-fractional porous medium equation has been introduced to successfully to describe the subdissusive version of the process \cite{Ger10,Pac03,Sun13}. In our previous works \cite{plociniczak13,plociniczak14,plociniczak15} we stated the main assumptions of the model and proved a number of its mathematical properties. 

By $u=u(x,t)$ we denote the (nondimensional) moisture concentration at a point $x\geq 0$ and time $t\geq 0$. We consider the following nonlocal PDE
\begin{equation}
\partial^\alpha_t u = \left(u^m u_x\right)_x, \quad 0<\alpha< 1, \quad m>1,
\label{eqn:DiffEqPDE}
\end{equation}
where the temporal derivative is of the Riemann-Liouville type 
\begin{equation}
\partial^\alpha_t u(x,t) = \frac{1}{\Gamma(1-\alpha)}\frac{\partial}{\partial t}\int_0^t (t-s)^{-\alpha} u(x,s) ds.
\end{equation}
In \cite{plociniczak15} the Reader can find the derivation of the above equation as a consequence of the trapping phenomenon. The initial-boundary conditions are as follows
\begin{equation}
u(x,0) = 0, \quad u(0,t) = 1, \quad x>0, \quad t>0,
\label{eq:CondPDE}
\end{equation}
which models a one-dimensional semi-infinite medium with the interface kept in a contact with water. Since the equation (\ref{eqn:DiffEqPDE}) is degenerate at $u=0$ it is natural to expect that its solution has a compact support for at least some values of $m$ (as in the classical case \cite{Atk71}). This has a straightforward physical meaning, namely the wetting front propagates at a finite speed.

In the previous work \cite{ploswi} we have proved that (\ref{eqn:DiffEqPDE}) possess a unique weak compactly supported solution in the class of sufficiently almost everywhere smooth functions which have a zero at some point. This paper presents a result that weakens those assumptions and leaves only the requirement of physically motivated boundedness. 

\section{Main result}
In this work we will consider the equation of anomalous diffusion in the self-similar form (for more details see \cite{plociniczak14,plociniczak15}). The  transformation leading to it can be commenced by putting $\eta = x t^{-\alpha/2}$ for some $0<\alpha\leq 1$ and denoting $u(x,t)=U(x t^{-\alpha/2})$. In the previous work \cite{ploswi} we proved the existence and uniqueness of the weak compactly supported solution of the resulting transformed problem
\begin{equation}\label{eq:1}
(U^m U')'=\left [(1-\alpha)-\tfrac{\alpha}{2}\eta \tfrac{\,d }{\,d \eta}\right]I^{0,1-\alpha}_{-\tfrac{2}{\alpha}}U(\eta), \quad U(0)=1, \quad U(\infty)=0, \quad m>1,
\end{equation}
under the {\it{a-priori}} assumption that this solution has a zero, i.e. $U(\eta^*)=0$ for some $\eta^* > 0$. Here, the Erd\'elyi-Kober operator $I^{a,b}_c$ (see \cite{kir93,kir97,sneddon}), is defined by
\begin{equation}
I^{a,b}_c U(\eta) = \frac{1}{\Gamma(b)}\int_0^1 (1-s)^{b-1}s^a U(s^\frac{1}{c}\eta)ds.
\label{eqn:EK}
\end{equation}
The main result of this work is to prove that such $\eta^{*}$ exists under the assumption that $U$ is bounded. To ensure the uniqueness of $\eta^{*}$ the auxiliary condition needs to be posed, namely
\begin{equation}
\lim\limits_{\eta\to\eta^-_*}-U(\eta)^m U'(\eta)=0.
\label{co:1}
\end{equation}
Physically, this is simply the no-flux requirement through the wetting front and can be derived from the equation alone (see \cite{ploswi}). 

In what follows we will consider $\alpha$ and $m$ to be a fixed constants. To prove our statement we will consider the following auxiliary problem
\begin{equation}\label{eq:2}
(U^m U')'=\left [(1-\alpha)-\tfrac{\alpha}{2}\eta \tfrac{\,d }{\,d \eta}\right]I^{0,1-\alpha}_{-\tfrac{2}{\alpha}}U(\eta), \quad U(0)=1, \quad U'(0)=-\beta, \quad m>1,
\end{equation}
where $\beta$ is a positive constant. Our technique is based on the fact that the above integro-differential problem can be transformed into an Volterra equation.
\begin{prop}
	If U$(\eta)$ is a continuous solution of 
	\begin{equation}\label{eq:3}
	U(\eta)^{m+1}=1+(m+1)\left[-\beta \eta+\int\limits_{0}^{\eta}((1-\tfrac{\alpha}{2})\eta-z)I^{0,1-\alpha}_{-\tfrac{2}{\alpha}}U(z)\,d z\right].
	\end{equation}
	then it is twice-differentiable and is a solution of \eqref{eq:2}.
\end{prop}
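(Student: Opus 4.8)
The plan is to recover the integro-differential equation \eqref{eq:2} by differentiating the Volterra equation \eqref{eq:3} twice, so the substance of the argument is the regularity bootstrap that licenses those two differentiations. I would first observe that the Erd\'elyi--Kober term $g(\eta):=I^{0,1-\alpha}_{-2/\alpha}U(\eta)=\frac{1}{\Gamma(1-\alpha)}\int_0^1(1-s)^{-\alpha}U(s^{-\alpha/2}\eta)\,ds$ is well defined and continuous in $\eta$: the weight $(1-s)^{-\alpha}$ is integrable on $(0,1)$ because $\alpha<1$, and since $U$ is continuous and bounded, dominated convergence gives continuity of $g$. Writing $\int_0^\eta((1-\tfrac\alpha2)\eta-z)g(z)\,dz=(1-\tfrac\alpha2)\eta\int_0^\eta g(z)\,dz-\int_0^\eta zg(z)\,dz$ and applying the fundamental theorem of calculus, the right-hand side of \eqref{eq:3} is then $C^1$ in $\eta$, hence so is $U^{m+1}$. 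Evaluating \eqref{eq:3} at $\eta=0$ gives $U(0)^{m+1}=1$, i.e. $U(0)=1$, so by continuity $U>0$ on a maximal interval $[0,\eta^*)$, and there $U=(U^{m+1})^{1/(m+1)}$ is the composition of the $C^1$ map $U^{m+1}$ with the smooth function $x\mapsto x^{1/(m+1)}$ on $(0,\infty)$; hence $U\in C^1[0,\eta^*)$.

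Differentiating \eqref{eq:3} once and dividing by $m+1$ yields $U^mU'=-\beta-\tfrac\alpha2\,\eta\,g(\eta)+(1-\tfrac\alpha2)\int_0^\eta g(z)\,dz$, which at $\eta=0$ returns $U'(0)=-\beta$ (using $U(0)=1$), the initial slope prescribed in \eqref{eq:2}. The crux is now to show $g\in C^1$, since $\eta\,g(\eta)$ is the only term on the right whose differentiability is not yet clear. For this I would differentiate under the integral sign: formally $g'(\eta)=\frac{1}{\Gamma(1-\alpha)}\int_0^1(1-s)^{-\alpha}s^{-\alpha/2}U'(s^{-\alpha/2}\eta)\,ds$, where the extra singular factor $s^{-\alpha/2}$ is integrable near $s=0$ precisely because $\alpha<2$; combining this with the local boundedness of $U'$ on $[0,\eta^*)$ and the integrable control of $U'$ near the free boundary supplied by the identity for $U^mU'$ just obtained, one justifies the interchange and obtains $g\in C^1(0,\eta^*)$, with continuity of $g'$ up to $\eta=0$ again coming from dominated convergence.

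Differentiating the identity for $U^mU'$ once more then gives $(U^mU')'=(1-\alpha)g(\eta)-\tfrac\alpha2\,\eta\,g'(\eta)=\bigl[(1-\alpha)-\tfrac\alpha2\eta\tfrac{d}{d\eta}\bigr]I^{0,1-\alpha}_{-2/\alpha}U(\eta)$, which is exactly \eqref{eq:2}; moreover, since $U\in C^1$ with $U>0$, the relation $U'=(U^mU')\,U^{-m}$ shows $U'\in C^1$, i.e. $U$ is twice differentiable. I expect the genuine obstacle to be the claim $g\in C^1$: carefully justifying differentiation of the Erd\'elyi--Kober operator against its singular kernel, and in particular controlling $U'$ as $\eta\to\eta^*$, where $U\to0$ and $U'$ may be unbounded (though only in an integrable fashion). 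A minor subtlety is that the passage from $U^{m+1}$ to $U$ is clean only where $U$ is positive; on the complement of $[0,\eta^*)$ the statement is to be read with $U\equiv0$, where \eqref{eq:2} holds trivially.
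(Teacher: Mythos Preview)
Your approach is the same as the paper's---differentiate \eqref{eq:3} twice and read off \eqref{eq:2}---but you are considerably more careful than the paper, which simply asserts that the right-hand side of \eqref{eq:3} is $C^2$ ``since $I^{0,1-\alpha}_{-2/\alpha}U$ is continuous'' and then differentiates. In fact continuity of $g=I^{0,1-\alpha}_{-2/\alpha}U$ only yields $C^1$ of the right-hand side: after one differentiation the term $-\tfrac{\alpha}{2}\eta\,g(\eta)$ appears, and a second differentiation genuinely requires $g\in C^1$. Your bootstrap ($U^{m+1}\in C^1\Rightarrow U\in C^1$ where $U>0$ $\Rightarrow$ differentiate under the Erd\'elyi--Kober integral, using that $(1-s)^{-\alpha}s^{-\alpha/2}$ is integrable for $\alpha<1$, to get $g\in C^1$) is exactly what is needed to close this gap, and your flagging of the free-boundary issue (where $U\to 0$ and the passage $U^{m+1}\mapsto U$ is no longer smooth) is likewise a point the paper does not address. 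So: same route, but your version actually supplies the regularity the paper takes for granted.
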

\begin{proof}
	Assume that $U(\eta)$ is a solution of \eqref{eq:3}. The right-hand side of \eqref{eq:3} is $C^2$ since $I^{0,1-\alpha}_{-\frac{2}{\alpha}} U$ is continuous. Then, the left-hand side is also $C^2$ since $m>1$. Differentiating twice the equation \eqref{eq:3} we get \eqref{eq:2}. Thus $U(\eta)$ is also a solution of \eqref{eq:2}.
\end{proof}
Firstly we will prove that the solution of \eqref{eq:3} exists. To do this we need to introduce the function space in which we will operate
\begin{equation}
 X:=\{U\in C[0,\infty]:0\leq U\leq 1\},
\end{equation}
with the uniform norm, i.e. $\left\|U\right\|=\sup_{0\leq t\leq \infty}|U(t)|$. Next, we introduce the function space $M$,
\begin{equation}
  M:=\{U\in C[0,\infty]:U(0)=1,0\leq  U\leq 1\},
\end{equation} 
which is subspace of $X$ and in which the solution of \eqref{eq:3} will be sought. It is easy to see that $X$ is a Banach space ($X$ is subspace of $\mathcal{B}[0,\infty]$, the space of bounded functions, and is closed). For the same reason $M$ is also a Banach space. Moreover, we can make the following simple observation.
\begin{prop}
	\label{prop:M}
	The subspace $M\subset X$ is bounded and convex. 
\end{prop}
\begin{proof}
Let $\gamma\in (0,1)$, $u,v\in M$ and introduce the function $w(x)=\gamma u(x)+(1-\gamma) v(x)$. From definition of $M$ we know that $u(0)=1$, $v(0)=1$ and $0\leq u\leq 1$, $0\leq v\leq 1$. From properties continuous functions the $w$ is also a continuous function. Next, if $u(0)=1$ and $v(0)=1$ then $w(0)=\alpha u(0)+(1-\beta)u(0)=\gamma +(1-\gamma)=1$. And the last properties to show is the boundedness
\begin{equation}
\gamma\cdot 0+(1-\gamma)\cdot 0=0\leq w=\gamma u+(1-\gamma) v\leq \gamma+1-\gamma=1.
\end{equation}
Hence, $w\in M$ which implies that $M$ is convex.
\end{proof}

In order to state the main result we have to construct an appropriate integral operator and show that it possesses a fixed point. First, define the auxiliary operator $S_\beta:M\rightarrow M$ (the well-definiteness will be settled in the following lemma)
\begin{equation}
	S_\beta(Y)=1+(m+1)\left[-\beta \eta+\int\limits_{0}^{\eta}((1-\tfrac{\alpha}{2})\eta-z)I^{0,1-\alpha}_{-\tfrac{2}{\alpha}}Y(z)^{1/(1+m)}\,d z\right],
\end{equation}
then the equation \eqref{eq:3} is equivalent to the fixed-point problem for $S$
\begin{equation}
	S_\beta(Y) = Y,
\end{equation}
what can be seen by the substitution $Y = U^{m+1}$. Now, we can prove several important properties of $S$.
\begin{lem}
\label{lem:S}
Assume that
\begin{equation}
	\beta\geq\frac{2-\alpha}{\sqrt{2\Gamma (2-\alpha)(m+1)}}=:\beta_0,
\end{equation} 
then for $Y\in M$ the following holds. 
\begin{enumerate}
	\item (\textit{Existence of a zero}) There exists $\eta^*(\beta)$ such that $S_\beta(Y)(\eta^*(\beta)) = 0$. Moreover,
	\begin{equation}
		\eta_2(\beta)\leq \eta^*(\beta)\leq \eta_1(\beta),
	\end{equation}
	where 
	\begin{equation}
	\begin{split}
	\eta_1(\beta)&:=\frac{ 4 \beta  \Gamma (2-\alpha )}{(2-\alpha)^2}-\frac{2\sqrt{2} \sqrt{2 \beta ^2 (m+1) \Gamma (2-\alpha )^2-(2-\alpha) \Gamma (3-\alpha)}}{(2-\alpha)^2\sqrt{m+1}}, \\	
	\eta_2(\beta)&:=\frac{2 \sqrt{2} \sqrt{\Gamma (2-\alpha ) \left(\alpha ^2+2 \beta ^2 (m+1) \Gamma (2-\alpha )\right)}}{\alpha ^2\sqrt{m+1}}-\frac{4 \beta  \Gamma (2-\alpha )}{\alpha ^2}.\\
	\end{split}
	\end{equation}
	\item (\textit{Estimates}) We have the following estimates
	\begin{equation}
		g_2(\eta,\beta) \leq S_\beta(Y)(\eta) \leq g_1(\eta,\beta),
	\end{equation}
	where
	\begin{equation}
	\begin{split}
		g_1(\eta,\beta)&:=1+(m+1)\left( -\beta \eta+\frac{(2-\alpha)^2}{\Gamma(2-\alpha)}\frac{\eta^2}{8}\right), \\
		g_2 (\eta,\beta)&:= 1+(m+1)\left( -\beta \eta-\frac{\alpha ^2}{\Gamma(2-\alpha)}\frac{\eta^2}{8}\right).
	\end{split}
	\end{equation}
	\item (\textit{Range}) The operator $S_\beta$ maps $M$ into itself.
\end{enumerate}
\end{lem}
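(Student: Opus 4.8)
The plan is to deduce all three items from a single elementary bound on the Erd\'elyi--Kober term together with the sign pattern of the kernel $z\mapsto(1-\tfrac{\alpha}{2})\eta-z$, reducing the whole lemma to the analysis of the two explicit quadratics $g_1,g_2$. First I would rewrite \eqref{eqn:EK} for the parameters at hand: for $Y\in M$,
\[
I^{0,1-\alpha}_{-\tfrac{2}{\alpha}}Y(z)^{1/(1+m)}=\frac{1}{\Gamma(1-\alpha)}\int_0^1(1-s)^{-\alpha}Y(s^{-\alpha/2}z)^{1/(1+m)}\,ds .
\]
Since $0\le Y\le1$ gives $0\le Y^{1/(1+m)}\le1$, this produces the uniform bound $0\le I^{0,1-\alpha}_{-\tfrac{2}{\alpha}}Y(\cdot)^{1/(1+m)}\le\tfrac{1}{\Gamma(2-\alpha)}$, using $\int_0^1(1-s)^{-\alpha}\,ds=\tfrac{1}{1-\alpha}$ and $(1-\alpha)\Gamma(1-\alpha)=\Gamma(2-\alpha)$; the same representation and dominated convergence (the weight $(1-s)^{-\alpha}$ is integrable on $(0,1)$) show $I^{0,1-\alpha}_{-\tfrac{2}{\alpha}}Y(\cdot)^{1/(1+m)}$, and hence $S_\beta(Y)$, is continuous.

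For item 2 I would split the integral in the definition of $S_\beta(Y)$ at $\xi:=(1-\tfrac{\alpha}{2})\eta$, which is strictly between $0$ and $\eta$ because $\tfrac12<1-\tfrac{\alpha}{2}<1$. The kernel is $\ge0$ on $[0,\xi]$ and $\le0$ on $[\xi,\eta]$, so bounding the Erd\'elyi--Kober factor above by $\tfrac{1}{\Gamma(2-\alpha)}$ on $[0,\xi]$ and by $0$ on $[\xi,\eta]$ gives the upper estimate, and the opposite choice the lower one. The two remaining integrals are elementary, $\int_0^\xi(\xi-z)\,dz=\xi^2/2=(2-\alpha)^2\eta^2/8$ and, with $b=1-\tfrac{\alpha}{2}$, $\int_\xi^\eta(\xi-z)\,dz=\eta^2(b-\tfrac12-\tfrac12 b^2)=-\tfrac12(b-1)^2\eta^2=-\alpha^2\eta^2/8$; multiplying by $(m+1)$ and adding $1-(m+1)\beta\eta$ gives exactly $g_1(\eta,\beta)$ and $g_2(\eta,\beta)$.

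For item 1, observe that $g_1(0,\beta)=g_2(0,\beta)=1$, that $g_2(\cdot,\beta)$ opens downward and so has a unique positive root, which the quadratic formula identifies with $\eta_2(\beta)$, while $g_1(\cdot,\beta)$ opens upward with minimum value $1-\tfrac{2(m+1)\Gamma(2-\alpha)}{(2-\alpha)^2}\beta^2$, which is $\le0$ precisely when $\beta\ge\beta_0$; thus the hypothesis $\beta\ge\beta_0$ is exactly the discriminant condition for $g_1(\cdot,\beta)$ to have real roots, and the smaller of them is the stated $\eta_1(\beta)$ (after using $\Gamma(3-\alpha)=(2-\alpha)\Gamma(2-\alpha)$ to match the displayed radical). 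Since $S_\beta(Y)(0)=1>0$ while item 2 gives $S_\beta(Y)(\eta_1(\beta))\le g_1(\eta_1(\beta),\beta)=0$, continuity of $S_\beta(Y)$ and the intermediate value theorem furnish a zero; taking $\eta^*(\beta)$ to be the first one gives $\eta^*(\beta)\le\eta_1(\beta)$, and since item 2 also gives $S_\beta(Y)\ge g_2(\cdot,\beta)>0$ on $[0,\eta_2(\beta))$, the first zero cannot occur before $\eta_2(\beta)$, so $\eta_2(\beta)\le\eta^*(\beta)$ (which also forces $\eta_2(\beta)\le\eta_1(\beta)$).

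Item 3 is where the care lies, since the raw expression for $S_\beta(Y)$ grows quadratically and is certainly not bounded by $1$ on all of $[0,\infty]$. I would use the free boundary: on $[0,\eta^*(\beta)]$ we have $0\le S_\beta(Y)\le g_1(\cdot,\beta)$, and because the smaller root $\eta_1(\beta)$ does not exceed the abscissa of the vertex of $g_1(\cdot,\beta)$, which in turn is strictly less than the point where $g_1(\cdot,\beta)$ returns to the value $1$, one has $g_1(\cdot,\beta)\le1$ on $[0,\eta_1(\beta)]\supseteq[0,\eta^*(\beta)]$, hence $0\le S_\beta(Y)\le1$ there; extending $S_\beta(Y)$ by $0$ for $\eta>\eta^*(\beta)$, the natural prescription past the wetting front, keeps it continuous on $[0,\infty]$ with $S_\beta(Y)(0)=1$, so $S_\beta(Y)\in M$. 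The main obstacle, then, is precisely this bookkeeping --- making sure $S_\beta(Y)$ does not overshoot $1$ before the free boundary (controlled by the location of $\eta_1(\beta)$ relative to the vertex of $g_1$) and that $\beta_0$ is exactly the threshold item 1 requires; the Erd\'elyi--Kober estimate and the quadratic computations themselves are routine.
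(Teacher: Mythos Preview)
Your proof is correct and follows the same route as the paper: bound the Erd\'elyi--Kober factor between $0$ and $1/\Gamma(2-\alpha)$, split the integral at the sign change $\xi=(1-\tfrac{\alpha}{2})\eta$ of the kernel to obtain the quadratic envelopes $g_1,g_2$, and read off the zero and its location from the roots of these quadratics via the intermediate value theorem. You are in fact more careful about item~3 than the paper, which argues $S_\beta(Y)\le 1$ only on $[0,\eta^*(\beta)]$ and tacitly defers the extension by zero past the free boundary to the truncated operator $A_\beta$ introduced in the subsequent theorem.
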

\begin{proof}
	Let us take $Y\in M$ and check when the integral operator $S$ is well-defined. From the definition of the space $M$ we can write an inequality for the Erd\'elyi-Kober operator 
	\begin{equation}
	0\leq{\Large I}^{0,1-\alpha}_{-\tfrac{2}{\alpha}}U(\eta)=\frac{1}{\Gamma(1-\alpha)}\int\limits_0^1(1-s)^{-\alpha}Y(s^{\tfrac{-\alpha}{2}}\eta)^{1/(1+m)}\,d s\leq \frac{1}{\Gamma(2-\alpha)},
	\end{equation}
	which we will use in further parts of the proof. The function $S_\beta(Y)(\eta)$ can be estimated from the above by
	\begin{equation}
	S_\beta(Y)(\eta)\leq 1+(m+1)\left[-\beta \eta+\frac{1}{\Gamma (2-\alpha)}\int\limits_{0}^{\left(1-\frac{\alpha}{2}\right)\eta}\left(\left(1-\frac{\alpha}{2}\right)\eta-z\right)\,d z\right],
	\end{equation}
	what can be integrated to yield $S_\beta(Y)(\eta)\leq g_1 (\eta,\beta)$. On the other hand, in the same way we can find the lower limit of $S_\beta(Y)(\eta)$
	\begin{equation}
	S_\beta(Y)(\eta) \geq 1+(m+1)\left(-\beta \eta -\frac{1}{\Gamma(2-\alpha)}\int\limits_{\left(1-\frac{\alpha}{2}\right)\eta}^{\eta}\left(\left(1-\frac{\alpha}{2}\right)\eta-z\right)\,d z\right),
	\end{equation}
	which implies that $S_\beta(Y)(\eta) \geq g_2 (\eta,\beta)$. 
	
	Now, notice that $g_1$ is decreasing in the interval $[0,\eta^*(\beta)]$ and attains its maximal value equal to $1$ at $\eta=0$. Hence, we get that $S_\beta(Y)(\eta)\leq 1$. Also, from the definition we immediately have $S_\beta(Y)(0)=1$ and thus $S_\beta(Y)\in M$.
	
	We see that both, $g_1$ and $g_2$ are quadratic functions so it is straightforward (but tedious) to find their positivity intervals. We get that $g_1(\eta,\beta) \geq 0 $ for $\eta\leq \eta_1(\beta)$ and the function $g_2(\eta,\beta) \geq 0 $ for $\eta\leq \eta_2(\beta)$. Further, we can see that $\eta_2$ is always greater than zero. The square-root in $\eta_1(\beta)$ is real only when
	\begin{equation}\label{beta}
		\beta\geq\frac{2-\alpha}{\sqrt{2\Gamma (2-\alpha)(m+1)}},
	\end{equation}
	what is satisfied by the assumption. Hence, due to the continuity of $g_1$, $g_2$ and $S_\beta(Y)(\eta)$ we can conclude the existence of at least one $\eta^*(\beta)$ which satisfies inequalities.
	\begin{equation}
		\eta_2(\beta)\leq \eta^*(\beta)\leq \eta_1(\beta).
	\end{equation}
	This concludes the proof.
\end{proof}
We can now see that for the appropriate values of $\beta$ the operator $S_\beta(Y)$ is well-defined on $M$. We can now state the main existence result.

\begin{thm}
	\label{thm:existence}
	For $\beta\geq\beta_0$ the equation \eqref{eq:3} has at least one nonnegative compactly supported solution $Y\in M$. Moreover, $\text{supp} \; Y = [0,\eta^*(\beta)]$. 
\end{thm}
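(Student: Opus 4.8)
The plan is to obtain $Y$ as a fixed point of $S_\beta$ on $M$ via Schauder's theorem and then read off the support from Lemma~\ref{lem:S}. By Proposition~\ref{prop:M} the set $M$ is bounded and convex, and it is closed in $X$ (it is cut out by the closed conditions $U(0)=1$, $U\ge 0$, $U\le 1$), hence a closed bounded convex subset of the Banach space $X$; by Lemma~\ref{lem:S}(3), $S_\beta$ maps $M$ into itself, with $S_\beta(Y)$ understood to be continued by $0$ beyond its first zero $\eta^*(\beta)\in[\eta_2(\beta),\eta_1(\beta)]$ (consistent with the Erd\'elyi--Kober operator vanishing there, since it samples $Y$ only at the points $s^{-\alpha/2}\eta\ge\eta$). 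It then remains to verify that $S_\beta$ is continuous and that $S_\beta(M)$ is relatively compact.

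For relative compactness I would apply the Arzel\`a--Ascoli theorem on the compact space $[0,\infty]$. Uniform boundedness is immediate from Lemma~\ref{lem:S}(2)--(3). Equicontinuity on any finite window $[0,T]$ follows from a bound on the derivative that is uniform over $M$: differentiating under the integral sign,
\[
\frac{d}{d\eta}S_\beta(Y)(\eta)=(m+1)\left[-\beta-\tfrac{\alpha}{2}\,\eta\, I^{0,1-\alpha}_{-2/\alpha}Y(\eta)^{1/(m+1)}+\left(1-\tfrac{\alpha}{2}\right)\int_0^\eta I^{0,1-\alpha}_{-2/\alpha}Y(z)^{1/(m+1)}\,dz\right],
\]
and since $0\le I^{0,1-\alpha}_{-2/\alpha}(\cdot)\le 1/\Gamma(2-\alpha)$ on $M$, this is bounded on $[0,T]$ by a constant independent of $Y$. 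Equicontinuity at $\infty$ is free once $S_\beta(Y)\equiv 0$ on $[\eta_1(\beta),\infty]$ uniformly in $Y$. Continuity of $S_\beta$ is routine: if $Y_n\to Y$ uniformly then $Y_n^{1/(m+1)}\to Y^{1/(m+1)}$ uniformly (uniform continuity of $t\mapsto t^{1/(m+1)}$ on $[0,1]$), hence $I^{0,1-\alpha}_{-2/\alpha}Y_n^{1/(m+1)}\to I^{0,1-\alpha}_{-2/\alpha}Y^{1/(m+1)}$ uniformly by boundedness of the Erd\'elyi--Kober operator, and then $S_\beta(Y_n)\to S_\beta(Y)$ uniformly.

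Schauder's fixed point theorem then produces $Y\in M$ with $S_\beta(Y)=Y$; by the substitution $Y=U^{m+1}$ the function $U:=Y^{1/(m+1)}$ is a nonnegative continuous solution of \eqref{eq:3}, hence --- by the transformation proposition --- of \eqref{eq:2}. For the support, let $\eta^*(\beta)$ be the first zero of $Y$; Lemma~\ref{lem:S}(1) gives $\eta^*(\beta)\in[\eta_2(\beta),\eta_1(\beta)]$, and $Y>0$ on $[0,\eta^*(\beta))$ because $Y(0)=1$ and $Y$ is continuous. For $\eta\ge\eta^*(\beta)$ every point $s^{-\alpha/2}\eta$ at which the Erd\'elyi--Kober operator samples $Y$ lies in $[\eta^*(\beta),\infty)$, so feeding the fixed-point identity with $Y\equiv 0$ to the right of $\eta^*(\beta)$ is self-consistent and forces $Y\equiv 0$ there; hence $\text{supp}\,Y=[0,\eta^*(\beta)]$.

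The main obstacle, as I see it, sits in these last two steps rather than in the abstract fixed-point machinery. First, making Arzel\`a--Ascoli work honestly on $[0,\infty]$ requires the uniform tail control above, and hence the correct reading of $S_\beta$ as truncated at its first zero --- this is the one place where the operator as literally written escapes $M$ (for instance $S_\beta(1)$ grows quadratically). Second, upgrading Lemma~\ref{lem:S}(1) from ``a zero exists'' to the one-sided conclusion that the solution, once it reaches zero, stays there is where the nonlocal, purely forward-looking structure of the Erd\'elyi--Kober operator is essential; making this argument non-circular (the claim about $Y$ past $\eta^*(\beta)$ is precisely what one is trying to establish) is the delicate point, and it is also the link to the no-flux condition \eqref{co:1} studied afterwards.
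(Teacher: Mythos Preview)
Your approach is essentially the paper's: apply Schauder on $M$ to the truncated operator and read off the support. The paper makes the truncation explicit by introducing a new operator $A_\beta$ that agrees with $S_\beta$ up to the first zero $\eta^*(\beta)$ and is identically zero afterwards; your parenthetical ``$S_\beta(Y)$ understood to be continued by $0$ beyond its first zero'' is exactly this move. Where you spell out Arzel\`a--Ascoli and the continuity of the map by hand, the paper simply invokes a complete-continuity theorem for integral operators from Precup's book.

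Two remarks on your closing worries. First, the circularity you flag in the support argument dissolves once the truncation is built into the \emph{definition} of the operator: for a fixed point $Y=A_\beta(Y)$, the vanishing of $Y$ on $(\eta^*(\beta),\infty)$ is definitional, not something to be proved a posteriori. Second, a genuine technical point that neither you nor the paper fully addresses is the continuity of the \emph{truncated} operator: your argument shows $S_\beta(Y_n)\to S_\beta(Y)$ uniformly, but the cutoff point $\eta^*(\beta)$ also depends on $Y$, and one should check that the first zero varies continuously (it does, since $S_\beta(Y)$ is sandwiched between the strictly decreasing parabolas $g_1,g_2$ near their zeros, forcing a transversal crossing). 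Finally, note that a fixed point of $A_\beta$ satisfies \eqref{eq:3} only on $[0,\eta^*(\beta)]$; making it a solution of \eqref{eq:3} on the whole half-line is exactly equivalent to the no-flux condition \eqref{co:1}, which is the subject of the next theorem.
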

\begin{proof}
It is convenient to introduce yet another operator which maps $M$ into itself 
	\begin{equation}\label{op:1}
	A_\beta(Y)=
	\begin{cases}
	1+(m+1)\left[-\beta \eta+\int\limits_{0}^{\eta}((1-\tfrac{\alpha}{2})\eta-z)I^{0,1-\alpha}_{-\tfrac{2}{\alpha}}Y(z)^{1/(1+m)}\,d z\right], & \text{for}\ \eta\leq \eta^*(\beta), \\
	0, & \text{for}\ \eta> \eta^*(\beta),
	\end{cases}
	\end{equation}
	where $\eta^*$ is the smallest argument that satisfies the equation below (Lemma \ref{lem:S} assures its well-definiteness)
	\begin{equation}
	1+(m+1)\left[-\beta \eta^*(\beta)+\int\limits_{0}^{\eta^*(\beta)}((1-\tfrac{\alpha}{2})\eta^*(\beta)-z)I^{0,1-\alpha}_{-\tfrac{2}{\alpha}}Y(z)^{1/(1+m)}\,d z\right]=0.
	\end{equation}
Our original problem is thus reduced to showing the existence of a solution of the following
\begin{equation}\label{eq:5}
	A_\beta(Y)=Y,
\end{equation}
where operator $A$ is defined above.

From Proposition \ref{prop:M} we have that $M$ is bounded, closed and convex. Moreover, $M\subset X$, i.e. $M$ is a subspace of a Banach space. The kernel of integral operator $A$ is continuous, hence using the Theorem 3.4 from \cite{precup} we conclude that the operator $A$ is completely continuous. Conclusively, we can use Schauder's theorem \cite{precup,zeidler}, which says that equation \eqref{eq:5} has a solution. The form of the compact support comes from the definition of $A_\alpha$. 
\end{proof}	
As for now we know that \eqref{eq:3} has a bounded solution which possesses a zero provided that $\beta$ is large enough. It is also true that inside the interval of the admissible $\beta$ there exists exactly one value if of such for which the no-flux condition \eqref{co:1} is satisfied.

\begin{thm}
	There exists a value of $\beta$ (and hence $\eta^{*}(\beta)$), for which the solution of \eqref{eq:3} satisfies \eqref{co:1}.
\end{thm}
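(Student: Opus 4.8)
The plan is to set up a continuity/intermediate-value argument in the parameter $\beta$ on the interval $[\beta_0,\infty)$. First I would fix, for each admissible $\beta$, the solution $Y_\beta\in M$ produced by Theorem \ref{thm:existence}, write $U_\beta = Y_\beta^{1/(m+1)}$, and recall from Proposition 1 that $U_\beta$ is twice differentiable on $[0,\eta^*(\beta))$ and solves \eqref{eq:2}. The flux quantity of interest, $-U_\beta^m U_\beta'$, equals (up to the constant $1/(m+1)$) the derivative of the right-hand side of \eqref{eq:3}, i.e.
\begin{equation}
\Phi(\beta) := \lim_{\eta\to\eta^*(\beta)^-}\left(-U_\beta(\eta)^m U_\beta'(\eta)\right) = \beta - \int_0^{\eta^*(\beta)} I^{0,1-\alpha}_{-\frac{2}{\alpha}}U_\beta(z)\,dz,
\end{equation}
so the no-flux condition \eqref{co:1} is exactly $\Phi(\beta)=0$. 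The goal is then to show $\Phi$ changes sign on $[\beta_0,\infty)$ and is continuous, so that a zero exists.

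The sign analysis at the two ends is the heart of the argument. At $\beta=\beta_0$ I expect $\Phi(\beta_0)<0$: when $\beta$ is at its minimal admissible value the quadratic lower bound $g_2(\cdot,\beta)$ from Lemma \ref{lem:S} still stays positive on a sizeable interval, forcing $\eta^*(\beta_0)$ to be relatively large, so the integral term dominates $\beta$. Concretely I would bound $\int_0^{\eta^*}I^{0,1-\alpha}_{-\frac{2}{\alpha}}U_\beta \ge$ (something) $\cdot \eta^*$ using that $U_\beta$ is close to $1$ near the origin, and compare with $\eta_2(\beta_0)$; alternatively one can argue directly that at $\beta=\beta_0$ the upper barrier $g_1$ degenerates (its two roots coincide, the square root in $\eta_1$ vanishes), which pins $\eta^*(\beta_0)$ from below and makes the flux strictly negative. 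For large $\beta$ I expect $\Phi(\beta)>0$: here $\eta^*(\beta)\le\eta_1(\beta)=O(1/\beta)$ as $\beta\to\infty$ (expanding the square root shows $\eta_1(\beta)\sim \Gamma(3-\alpha)/(\beta(m+1)\Gamma(2-\alpha))\to 0$), while the Erdélyi–Kober operator is bounded by $1/\Gamma(2-\alpha)$, so the integral term is $O(1/\beta)$, hence negligible compared to $\beta$, giving $\Phi(\beta)\to+\infty$. Between these two regimes, continuity of $\Phi$ follows from continuity of $\beta\mapsto Y_\beta$ (or at least of $\beta\mapsto\eta^*(\beta)$ together with the integral), and then the intermediate value theorem yields the desired $\beta$.

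The main obstacle I anticipate is the continuity (and well-posedness of the selection) of $\beta\mapsto Y_\beta$: Schauder's theorem gives existence but not uniqueness for fixed $\beta$, so $\Phi$ is a priori only a set-valued map, and I would need to either establish uniqueness of the fixed point of $A_\beta$ for each $\beta$ (e.g. via a monotonicity/contraction-type estimate exploiting that $S_\beta$ is order-preserving in $Y$, or via the explicit barriers squeezing the solution), or argue on the closed graph of $\beta\mapsto Y_\beta$ and show that $\Phi$ nonetheless attains every intermediate value. A clean way around this is to note monotone dependence: $S_\beta(Y)$ is pointwise decreasing in $\beta$, so $\eta^*(\beta)$ and the whole solution decrease with $\beta$, which both forces $\Phi$ to be monotone increasing in $\beta$ and repairs any continuity gap, since a monotone function has only jump discontinuities and one checks $\Phi$ cannot jump over $0$ because the barriers $g_1,g_2$ vary continuously with $\beta$. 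A secondary technical point is justifying that the limit defining $\Phi(\beta)$ exists and equals the stated expression — this is immediate from differentiating \eqref{eq:3}, since the right-hand side is $C^1$ up to $\eta^*(\beta)$, but it should be stated explicitly so that \eqref{co:1} is genuinely equivalent to $\Phi(\beta)=0$.
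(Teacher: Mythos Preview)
Your approach coincides with the paper's: differentiate \eqref{eq:3} to express the flux at $\eta^*(\beta)$ explicitly, show it changes sign as $\beta$ varies, and invoke the intermediate value theorem. Two remarks are in order.

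First, a computational slip: differentiating the right-hand side of \eqref{eq:3} yields
\[
-U^m U'(\eta^*) \;=\; \beta - \Bigl(1-\tfrac{\alpha}{2}\Bigr)\int_0^{\eta^*(\beta)} I^{0,1-\alpha}_{-2/\alpha}U(z)\,dz,
\]
after observing that $I^{0,1-\alpha}_{-2/\alpha}U(\eta^*)=0$ (since $U\equiv 0$ on $[\eta^*,\infty)$ and the Erd\'elyi--Kober kernel samples only arguments $s^{-\alpha/2}\eta^*\geq\eta^*$); your formula for $\Phi$ is missing the factor $(1-\tfrac{\alpha}{2})$.

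Second, for the sign change the paper does not argue heuristically at $\beta_0$: it sandwiches $U^m U'(\eta^*)$ between two explicit functions $f_\pm(\beta)$ built from the quadratic barriers $g_{1,2}$ and the endpoints $\eta_{1,2}(\beta)$ of Lemma~\ref{lem:S}, then checks $f_+(\beta_0)>0$, $f_-(0)>0$, and $f_\pm(\beta)\to-\infty$ as $\beta\to\infty$, applying the intermediate value theorem to these barrier functions rather than to the flux directly. Your concern about continuity of $\beta\mapsto Y_\beta$ (non-uniqueness of the Schauder fixed point) is legitimate---the paper invokes Darboux without addressing this point---so the monotonicity-in-$\beta$ argument you sketch would in fact tighten the published proof.
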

\begin{proof}
	Assume that $U\in M$ and differentiate the equation \eqref{eq:3} to get
	\begin{equation}\label{eq:6}
	U(\eta^*(\beta))^m U'(\eta^*(\beta))=-\beta +\left(1-\frac{\alpha}{2}\right)\int\limits_0^{\eta^*(\beta)}{\Large I}^{0,1-\alpha}_{-\tfrac{2}{\alpha}}U(z)\,d z-\frac{\alpha}{2}\eta^*(\beta){\Large I}^{0,1-\alpha}_{-\tfrac{2}{\alpha}}U(\eta^*(\beta)).
	\end{equation}	
	Since $U(\eta) = 0$ for $\eta\geq \eta^*(\beta)$ we have
	\begin{equation}\label{eq:7}
	\frac{\alpha}{2}\eta^*(\beta){\Large I}^{0,1-\alpha}_{-\tfrac{2}{\alpha}}U(\eta^*(\beta))=0,
	\end{equation}
	which, along with the former formula, implies
	\begin{equation}\label{eq:8}
	U(\eta^*(\beta))^m U'(\eta^*(\beta))=-\beta +\left(1-\frac{\alpha}{2}\right)\int\limits_0^{\eta^*(\beta)}{\Large I}^{0,1-\alpha}_{-\tfrac{2}{\alpha}}U(z)\,d z.
	\end{equation}
	Now, we estimate the magnitude of $U(\eta^*(\beta))^m U'(\eta^*(\beta))$. Using the boundedness of the function $U$ we have
	\begin{equation}
	U(\eta^*(\beta))^m U'(\eta^*(\beta))\leq-\beta+\left(1-\frac{\alpha}{2}\right)\frac{1}{\Gamma(2-\alpha)}\eta_1(\beta)=:f_+(\beta).
	\end{equation}
	On the other hand, we can write
	\begin{equation}
	\begin{split}
	U(\eta^*(\beta))^m U'(\eta^*(\beta))&\geq -\beta+\frac{1-\frac{\alpha}{2}}{\Gamma(2-\alpha)}\int\limits_{0}^{\eta_2(\beta)}\left( 1+(m+1)\left( -\beta z-\frac{\alpha^2}{\Gamma(2-\alpha)}\frac{z^2}{8}\right)\right)^{\tfrac{1}{1+m}}\,d z\\
	& =: f_-(\beta).
	\end{split}
	\end{equation}
	The above estimates can be written as
	\begin{equation}
	 f_-(\beta)\leq U(\eta^*(\beta))^m U'(\eta^*(\beta))\leq f_+(\beta).
	\end{equation}
	Next, we compute the limits $\lim\limits_{\beta\to\infty}f_{\pm}(\beta)$. It is easy to see that
	\begin{gather}
	\lim\limits_{\beta \to \infty}\eta_{1,2}(\beta)=0,
	\end{gather}
	and hence $\lim\limits_{\beta\to\infty}f_{\pm}(\beta)=-\infty$. Moreover,
	\begin{flalign}
	f_+(\beta_0(m))=\frac{\alpha }{\sqrt{2} \sqrt{(m+1) \Gamma (2-\alpha )}}>0, 
	\end{flalign}
	We further see that $\eta_2(0)>0$ so consequently $f_-(0)>0$. Functions
	$f_{\pm}$ are $\beta$-decreasing and continuous. Therefore, since $f_\pm(\beta)$ change their sign when $\beta$ increases from the Darboux Theorem there exists a $\beta^*$ (and hence $\eta^*$) such that the condition $U(\eta^*)^m U'(\eta^*)=0$ is satisfied.	
\end{proof}

Equation (\ref{eq:3}) has thus a unique solution that belongs to $M$, satisfies \eqref{co:1} and has a zero. Now, we can invoke our previous results \cite{ploswi} to conclude that \eqref{eq:1} has a unique bounded solution.  
\begin{cor}[\cite{ploswi}, Corollary 1]
	Let $U$ be a bounded weak solution of \eqref{eq:1} such that $0\leq U(\eta)\leq 1$. Then,
	\begin{itemize}
		\item it is compactly supported with $\text{supp }U = [0,\eta^*]$ for a unique $\eta^*>0$,
		\item it is unique,
		\item it is twice differentiable in the neighbourhood of $\eta$ such that $U(\eta) >0$,
		\item it is monotone decreasing.
	\end{itemize}
\end{cor}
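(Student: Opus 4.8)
The proof is a synthesis: the four assertions are already available under the a‑priori hypothesis that the self‑similar profile possesses a zero (this is exactly \cite[Corollary~1]{ploswi}), so what remains is to show that mere boundedness in $[0,1]$ forces that hypothesis, and that the profile built in this section is the one meant there. The first step is the reduction to the Volterra setting: given a bounded weak solution $U$ of \eqref{eq:1} with $0\le U\le1$, one integrates the self‑similar equation twice against $U(0)=1$ and rewrites the fractional operator through its Erd\'elyi--Kober form \eqref{eqn:EK}, obtaining that $U$ solves \eqref{eq:3} with the shooting parameter identified as $\beta:=-U'(0)$; the value $U'(0)$ makes sense because, by the equivalence of \eqref{eq:3} and \eqref{eq:2} established above, every continuous solution of \eqref{eq:3} is $C^{2}$. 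Equivalently, $Y:=U^{m+1}$ is a fixed point of $S_\beta$ in $M$.

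The heart of the matter is then to show that boundedness forces compact support and pins down $\beta$. Since $Y\ge0$ identically, the upper barrier $S_\beta(Y)\le g_1(\cdot,\beta)$ of Lemma~\ref{lem:S}, read together with $Y=S_\beta(Y)\ge0$, is tenable only if $g_1(\cdot,\beta)$ stays nonnegative on the whole interval preceding the first vanishing of $Y$; as $g_1(\cdot,\beta)$ has a real zero precisely when $\beta\ge\beta_0$, this both places $\beta$ in the admissible range and, through Lemma~\ref{lem:S}(1), which then supplies a genuine zero $\eta^{*}(\beta)\in[\eta_2(\beta),\eta_1(\beta)]$, shows that $U$ must vanish at a finite point rather than being an everywhere‑positive profile that merely decays to zero as $\eta\to\infty$. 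Feeding this into the selection theorem just proved, the no‑flux condition \eqref{co:1} isolates a \emph{unique} admissible value $\beta=\beta^{*}$, and hence a unique $\eta^{*}=\eta^{*}(\beta^{*})$.

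It then remains to read off the conclusions. With $U$ now known to be compactly supported, $\text{supp}\,U=[0,\eta^{*}]$ by the construction of $A_\beta$ in Theorem~\ref{thm:existence}; uniqueness follows because any two bounded weak solutions in $[0,1]$ are, by the previous step, zero‑possessing, so \cite[Corollary~1]{ploswi} applies and identifies them; twice differentiability on $\{U>0\}$ is exactly the equivalence of \eqref{eq:3} and \eqref{eq:2} restricted to $[0,\eta^{*})$; and monotonicity follows once one knows $U'(0)=-\beta^{*}<0$ and re‑runs the backward‑propagation argument of \cite{ploswi} along \eqref{eq:6}, the right‑hand side of which equals $-\beta^{*}$ at the origin and $0$ at $\eta^{*}$ by the choice of $\beta^{*}$.

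The main obstacle is the second step: excluding a bounded, strictly positive self‑similar profile that only tends to zero at infinity. The quadratic barriers $g_{1},g_{2}$ of Lemma~\ref{lem:S} are tailored for this, but the argument is delicate because $g_2$ eventually turns negative and $g_1$ eventually grows without bound, so one has to localize to the first interval on which these comparisons carry information and combine the behaviour of the Erd\'elyi--Kober operator as $\eta\to\infty$ with the boundary condition $U(\infty)=0$ to close the argument.
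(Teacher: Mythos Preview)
The paper does not prove this corollary at all: it is stated as a direct citation of \cite[Corollary~1]{ploswi}, preceded only by the one-line remark ``Now, we can invoke our previous results \cite{ploswi} to conclude that \eqref{eq:1} has a unique bounded solution.'' The role of the present paper is to supply, via Theorems~\ref{thm:existence} and the subsequent selection theorem, the \emph{existence} of a profile with a zero satisfying \eqref{co:1}; the four listed properties are then imported wholesale from the earlier work. Your proposal therefore attempts considerably more than the paper does.

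That said, the extra argument you sketch contains real gaps. First, your inference that boundedness forces $\beta\ge\beta_0$ is reversed: the inequality $Y=S_\beta(Y)\le g_1(\cdot,\beta)$ together with $Y\ge0$ only tells you that $g_1\ge0$ on $\{Y>0\}$, which is automatic and imposes no constraint on $\beta$; it is the \emph{assumption} $\beta\ge\beta_0$ that makes $g_1$ vanish at a finite $\eta_1(\beta)$ and thereby forces $Y$ to vanish. For $\beta<\beta_0$ the barrier $g_1$ is strictly positive everywhere and yields nothing. Second, your identification $\beta=-U'(0)$ for an arbitrary bounded weak solution is circular: Proposition~1 shows that continuous solutions of \eqref{eq:3} are $C^2$, not that weak solutions of \eqref{eq:1} are differentiable at the origin, so you cannot yet speak of $U'(0)$. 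Third, you assert that the no-flux condition isolates a \emph{unique} $\beta^*$, but the selection theorem in the paper only establishes existence of such a $\beta$; uniqueness is not proved there and would require a separate monotonicity argument. Finally, you yourself flag the genuine obstacle---ruling out a strictly positive bounded profile with $U(\infty)=0$---but do not actually close it; the quadratic barriers are unhelpful for large $\eta$ precisely because $g_1\to+\infty$ and $g_2\to-\infty$.
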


\section{Conclusion}
In this paper we proved that the solution of \eqref{eq:1} has a compact support. We proceeded by reducing our problem to an auxiliary equation to which a shooting method was applied. One of the most important steps was to transform the fractional integro-differential equation into a nonlinear Voltera integral and choose an appropriate function space, where the solution of \eqref{eq:2} was sought. To find a unique $\eta^*$ we posed the no-flux condition \eqref{co:1} which arises from both physical and mathematical reasons. Based on our current results and theorems provided in \cite{ploswi}, we can say that solution of \eqref{eq:1} exist, is unique compactly supported decreasing function. This result rigorously confirms the obvious physical observation and, additionally, gives some useful estimates on the wetting front.

\section*{Acknowledgement}
This research was supported by the National Science Centre, Poland under the project with a signature NCN $2015/17/D/ST1/00625$.


\begin{thebibliography}{10}
	
	\bibitem{Atk71}
	FV~Atkinson and LA~Peletier.
	\newblock Similarity profiles of flows through porous media.
	\newblock {\em Archive for Rational Mechanics and Analysis}, 42(5):369--379,
	1971.
	
	\bibitem{De06}
	Eduardo~N de~Azevedo, Paulo~L de~Sousa, Ricardo~E de~Souza, M~Engelsberg, Mirla
	de N do~N Miranda, and Maria~Aparecida Silva.
	\newblock Concentration-dependent diffusivity and anomalous diffusion: a
	magnetic resonance imaging study of water ingress in porous zeolite.
	\newblock {\em Physical Review E}, 73(1):011204, 2006.
	
	\bibitem{El04}
	Abd El-Ghany El~Abd and Jacek~J Milczarek.
	\newblock Neutron radiography study of water absorption in porous building
	materials: anomalous diffusion analysis.
	\newblock {\em Journal of Physics D: Applied Physics}, 37(16):2305, 2004.
	
	\bibitem{Ger10}
	DN~Gerasimov, VA~Kondratieva, and OA~Sinkevich.
	\newblock An anomalous non-self-similar infiltration and fractional diffusion
	equation.
	\newblock {\em Physica D: Nonlinear Phenomena}, 239(16):1593--1597, 2010.
	
	\bibitem{kir93}
	Virginia~S Kiryakova.
	\newblock {\em Generalized fractional calculus and applications}.
	\newblock CRC press, 1993.
	
	\bibitem{kir97}
	Virginia~S Kiryakova and Bader~N Al-Saqabi.
	\newblock Transmutation method for solving {E}rd{\'e}lyi--{K}ober fractional
	differintegral equations.
	\newblock {\em Journal of Mathematical Analysis and Applications},
	211(1):347--364, 1997.
	
	\bibitem{Kun01}
	Michel K{\"u}ntz and Paul Lavall{\'e}e.
	\newblock Experimental evidence and theoretical analysis of anomalous diffusion
	during water infiltration in porous building materials.
	\newblock {\em Journal of Physics D: Applied Physics}, 34(16):2547, 2001.
	
	\bibitem{Lev97}
	M~Levandowsky, BS~White, and FL~Schuster.
	\newblock Random movements of soil amebas.
	\newblock {\em Acta Protozoologica}, 36:237--248, 1997.
	
	\bibitem{Met00}
	Ralf Metzler and Joseph Klafter.
	\newblock The random walk's guide to anomalous diffusion: a fractional dynamics
	approach.
	\newblock {\em Physics reports}, 339(1):1--77, 2000.
	
	\bibitem{Nos83}
	Ralph Nossal.
	\newblock Stochastic aspects of biological locomotion.
	\newblock {\em Journal of Statistical Physics}, 30(2):391--400, 1983.
	
	\bibitem{Ott90}
	A~Ott, JP~Bouchaud, D~Langevin, and W~Urbach.
	\newblock Anomalous diffusion in ‘‘living polymers’’: A genuine levy flight?
	\newblock {\em Physical review letters}, 65(17):2201, 1990.
	
	\bibitem{Pac03}
	Yakov Pachepsky, Dennis Timlin, and Walter Rawls.
	\newblock Generalized richards' equation to simulate water transport in
	unsaturated soils.
	\newblock {\em Journal of Hydrology}, 272(1):3--13, 2003.
	
	\bibitem{plociniczak14}
	{\L}ukasz P{\l}ociniczak.
	\newblock Approximation of the {E}rd{\'e}lyi--{K}ober operator with application
	to the time-fractional porous medium equation.
	\newblock {\em SIAM Journal on Applied Mathematics}, 74(4):1219--1237, 2014.
	
	\bibitem{plociniczak15}
	{\L}ukasz P{\l}ociniczak.
	\newblock Analytical studies of a time-fractional porous medium equation.
	derivation, approximation and applications.
	\newblock {\em Communications in Nonlinear Science and Numerical Simulation},
	24(1):169--183, 2015.
	
	\bibitem{plociniczak13}
	{\L}ukasz P{\l}ociniczak and Hanna Okrasi{\'n}ska.
	\newblock Approximate self-similar solutions to a nonlinear diffusion equation
	with time-fractional derivative.
	\newblock {\em Physica D: Nonlinear Phenomena}, 261:85--91, 2013.
	
	\bibitem{ploswi}
	{\L}ukasz P{\l}ociniczak and Mateusz \'Swita{\l}a.
	\newblock Existence and uniqueness results for a time-fractional nonlinear
	diffusion equation.
	\newblock {\em DOI: 10.1016/j.jmaa.2018.02.050}.
	
	\bibitem{precup}
	Radu Precup.
	\newblock {\em Methods in nonlinear integral equations}.
	\newblock Springer Science \& Business Media, 2013.
	
	\bibitem{Ram08}
	NMM Ramos, JMPQ Delgado, and VP~De~Freitas.
	\newblock Anomalous diffusion during water absorption in porous building
	materials--experimental evidence.
	\newblock In {\em Defect and Diffusion Forum}, volume 273, pages 156--161.
	Trans Tech Publ, 2008.
	
	\bibitem{sneddon}
	Ian~Naismith Sneddon.
	\newblock The use in mathematical physics of {E}rd{\'e}lyi--{K}ober operators
	and of some of their generalizations.
	\newblock In {\em Fractional Calculus and its applications}, pages 37--79.
	Springer, 1975.
	
	\bibitem{Sun13}
	HongGuang Sun, Mark~M Meerschaert, Yong Zhang, Jianting Zhu, and Wen Chen.
	\newblock A fractal richards’ equation to capture the non-boltzmann scaling of
	water transport in unsaturated media.
	\newblock {\em Advances in water resources}, 52:292--295, 2013.
	
	\bibitem{zeidler}
	Eberhard Zeidler.
	\newblock {\em Applied Functional Analysis, Applications to Mathematical
		Phisics}, volume 108.
	\newblock 1995.
	
\end{thebibliography}

\end{document}